\newtheorem{theorem}{Theorem}[section]
\newtheorem{lemma}[theorem]{Lemma}
\newtheorem{proposition}[theorem]{Proposition}
\theoremstyle{definition}
\newtheorem{definition}[theorem]{Definition}
\newtheorem{corollary}[theorem]{Corollary}
\newtheorem{remark}[theorem]{Remark}
\newtheorem{example}[theorem]{Example}
\theoremstyle{remark}
\numberwithin{equation}{section}
\begin{document}

\title[Smooth geometry of graded skew Clifford algebras]{Smooth geometry of graded skew Clifford algebras}


\author{Andr\'es Rubiano}
\address{Universidad ECCI}
\curraddr{Campus Universitario}
\email{arubianos@ecci.edu.co}
\thanks{}


\thanks{}

\subjclass[2020]{16W50, 16S38, 16E65, 58B32}

\keywords{Differentially smooth algebra, integrable calculus, graded Clifford algebra, skew polynomial ring.}

\date{}

\dedicatory{Dedicated to Jorge Camacho}

\begin{abstract} 

In this paper, we investigate the differential smoothness of graded skew Clifford algebras.

\end{abstract}

\maketitle


\section{Introduction}

The theory of connections in noncommutative geometry is by now classical (see, for instance, the expositions by Connes \cite{Connes1994} and by Giachetta et al.~\cite{Giachettaetal2005}). One starts with a differential graded algebra $\Omega A = \bigoplus_{n\geq 0} \Omega^n A$ over a $\Bbbk$-algebra $A = \Omega^0 A$, where $\Bbbk$ is a field, and defines a {\em connection} on a left $A$-module $M$ to be a linear map
\[
\nabla^0 : M \longrightarrow \Omega^1 A \otimes_A M
\]
satisfying the Leibniz rule
\[
\nabla^0(am) = da \otimes_A m + a\,\nabla^0(m), \qquad a\in A,\ m\in M.
\]
This framework is obtained by replacing, in the classical geometric picture, commutative algebras of functions on a manifold $X$ and modules of sections of vector bundles over $X$ by noncommutative algebras and one-sided modules. As Brzezi\'nski emphasizes, this formulation \textquotedblleft captures very well the classical context in which connections appear and transports it successfully to the realm of noncommutative geometry\textquotedblright\ \cite[p.~557]{Brzezinski2008}.

In \cite{Brzezinski2008}, Brzezi\'nski observes that, from a purely algebraic standpoint, this notion of connection reflects only part of a more symmetric picture. On the one hand, noncommutative connections are defined using the tensor functor, while the latter admits the {\em hom-functor} as a right adjoint; it is therefore natural to ask whether there exist connection-type objects formulated in terms of hom-functors. On the other hand, the linear dual $M^*$ of $M$ is a right $A$-module, but a left connection on $M$ in the above sense does not automatically induce a right connection on $M^*$. Taking into account the adjointness between tensor and hom, one is led to expect that any such induced structure must involve hom-functors in an essential way.

Guided by these considerations, Brzezi\'nski develops in \cite{Brzezinski2008} a natural and rich theory of connection-like maps acting on spaces of module homomorphisms. Because of the central role of $\operatorname{Hom}$-spaces, these objects are called {\em hom-connections} (they are also referred to as {\em divergences}, since, when $A$ is an algebra of functions on $\mathbb{R}^n$ and $\Omega^1(A)$ is the standard module of one-forms, one recovers the classical divergence operator of vector calculus \cite[p.~892]{Brzezinski2011}). Brzezi\'nski shows that hom-connections arise canonically from (strong) connections on {\em noncommutative principal bundles}, and that every left connection on a bimodule in the sense of Cuntz and Quillen \cite{CuntzQuillen1995} yields a hom-connection. Furthermore, he studies induction procedures for hom-connections via differentiable bimodules (and hence via morphisms of differential graded algebras), proves that any hom-connection extends to {\em higher forms}, and introduces a notion of {\em curvature}. Iterating a hom-connection can then be expressed in terms of this curvature, leading to a chain complex attached to a {\em flat} (curvature-zero) hom-connection; the homology of this complex can be viewed as dual to the twisted cohomology associated to an ordinary connection, a perspective that plays an important role in the theory of noncommutative differential fibrations \cite{BeggsBrzezinski2005}.

Subsequently, Brzezi\'nski, El Kaoutit and Lomp \cite{BrzezinskiElKaoutitLomp2010} constructed classes of {\em differential calculi} that admit hom-connections. Their construction is based on {\em twisted multi-derivations} and yields first-order calculi $\Omega^1(A)$ which are free as left and right $A$-modules. In geometric terms, $\Omega^1(A)$ is interpreted as a module of sections of the cotangent bundle over the ``noncommutative manifold'' represented by $A$, so this construction models parallelizable manifolds or coordinate charts. Later, Brzezi\'nski remarked that \textquotedblleft one should expect $\Omega^1(A)$ to be a finitely generated and projective module over $A$ (thus corresponding to sections of a non-trivial vector bundle by the Serre–Swan theorem)\textquotedblright\ \cite[p.~885]{Brzezinski2011}, and he extended the approach of \cite{BrzezinskiElKaoutitLomp2010} to the case where $\Omega^1(A)$ is finitely generated and projective.


Building on these ideas, Brzezi\'nski and Sitarz \cite{BrzezinskiSitarz2017} introduced a different, more differential-geometric notion of smoothness, called {\em differential smoothness}. Here, the focus is on differential graded algebras of a fixed dimension that admit a noncommutative analogue of the Hodge star isomorphism: one requires the existence of a top-degree form in a differential calculus over the algebra together with a version of Poincar\'e duality implemented as an isomorphism between complexes of differential and integral forms. This differential smoothness is conceptually distinct from homological smoothness and is more constructive in nature. As they explain, \textquotedblleft the idea behind the {\em differential smoothness} of algebras is rooted in the observation that a classical smooth orientable manifold, in addition to the de Rham complex of differential forms, also carries a complex of {\em integral forms} isomorphic to the de Rham complex \cite[Section~4.5]{Manin1997}. The de Rham differential can be seen as a special left connection, while the boundary operator in the complex of integral forms is an example of a {\em right connection}\textquotedblright\ \cite[p.~413]{BrzezinskiSitarz2017}.

Differential smoothness has since been established for a wide range of noncommutative algebras, including quantum two- and three-spheres, the quantum disc and plane, the noncommutative torus, the coordinate algebras of the quantum group $SU_q(2)$, the noncommutative pillow algebra, quantum cone algebras, quantum polynomial algebras, Hopf algebra domains of Gelfand–Kirillov dimension two that are not PI, various Ore extensions, certain 3-dimensional skew polynomial algebras, diffusion algebras in three generators, and noncommutative coordinate algebras arising as deformations of classical orbifolds such as the pillow orbifold, singular cones and lens spaces (see, for example, \cite{Brzezinski2015, Brzezinski2016, BrzezinskiElKaoutitLomp2010, BrzezinskiLomp2018, BrzezinskiSitarz2017, DuboisViolette1988, DuboisVioletteKernerMadore1990, Karacuha2015, KaracuhaLomp2014, ReyesSarmiento2022}). Notably, several of these algebras are also homologically smooth in the sense of Van den Bergh, highlighting interesting interactions between the homological and differential approaches to smoothness.

The origins of Clifford algebras can be traced back to 1876, when the philosopher and geometer
William Kingdon Clifford (1845–1879), then professor of applied mathematics at University College
London, wrote the manuscript entitled \emph{On the Classification of Geometric Algebras}. This text,
found among Clifford’s notes, corresponds to an abstract intended for presentation at the London
Mathematical Society on March 10, 1876, but remained unfinished due to his deteriorating health.
Clifford, who suffered from tuberculosis, passed away on March 3, 1879, at the age of thirty–five.

From a mathematical standpoint, Clifford’s contribution builds on Grassmann’s notion of product of
\emph{extensive quantities}: the product of two such quantities $u$ and $v$ is allowed to be either
another extensive quantity or a scalar. Starting from this idea, Clifford considered $n$ fundamental
units $e_1,\dots,e_n$ satisfying $e_j^2 = \pm 1$ and $e_j e_k = - e_k e_j$ for $j \neq k$, and studied
the associative algebra generated by these elements and their products. This leads to the now
classical notion of Clifford algebra. Formally, let $V$ be a vector space over a field $\Bbbk$ and
let $q : V \to \Bbbk$ be a quadratic form. The \emph{Clifford algebra} $C(q)$ is an associative
unital $\Bbbk$–algebra equipped with a linear map $i : V \to C(q)$ with the following universal
property: for every associative unital $\Bbbk$–algebra $A$ and every linear map
$j : V \to A$ such that $j(v)^2 = q(v)\,1_A$ for all $v \in V$, there exists a unique algebra
homomorphism $f : C(q) \to A$ satisfying $f \circ i = j$. This construction has been extensively
studied and plays a fundamental role in mathematical physics, for instance in the description of
spin-$\tfrac{1}{2}$ particles, the Dirac operator, Maxwell’s equations and the Dirac equation.

More refined versions arise by incorporating grading and noncommutativity. Using a suitable
$\mathbb{Z}$–grading, one obtains \emph{graded Clifford algebras}, which already occupy an important
place in noncommutative algebra, where they are related to other classes such as PBW extensions and
Ore extensions. A further level of generality is achieved by allowing skew–commutation relations in
the generators; this leads to the notion of \emph{graded skew Clifford algebras}.

Motivated by the ongoing development of differential smoothness, the aim of this paper is to investigate this property within the framework of graded Clifford and graded skew Clifford algebras. These algebras form a rich and highly structured class of quadratic Artin-Schelter regular algebras, playing a central role in the noncommutative projective geometry of quadric hypersurfaces and in the classification of regular algebras of low global dimension \cite{Cassidy2010, Cassidy2014, Vancliff2015b}. In particular, graded skew Clifford algebras, as introduced and developed in \cite{Cassidy2010, Nafari2015, Veerapen2017}, provide a flexible deformation framework that captures many families arising from noncommutative quadrics, elliptic curves and quantum $\mathbb{P}^n$. Understanding whether such algebras admit integrable differential calculi in the sense of Brzezi\'nski and Sitarz \cite{BrzezinskiSitarz2017} is a natural and, to date, largely unexplored question.

The article is organized as follows. In Section \ref{PreliminariesDifferentialsmoothnessofbi-quadraticalgebras} we recall the necessary preliminaries on differential graded algebras, hom-connections and the notion of differential smoothness in the sense of Brzezi\'nski and Sitarz, emphasizing the role of integrable calculi and volume forms. In Section \ref{Differentialandintegralcalculusbi-quadraticalgebras} we introduce graded Clifford and graded skew Clifford algebras, describe the associated first-order differential calculi arising from suitable families of automorphisms and matrices, and establish our main criterion for differential smoothness of these algebras. Section \ref{examples} is devoted to applications of this criterion: we analyze several concrete families of graded (skew) Clifford algebras, showing in each case whether the hypotheses of the main theorem are satisfied, and we include an example that falls outside this framework. Finally, in Section \ref{FutureworkDifferentialsmoothness} we outline some directions for future research, including possible extensions of our methods to higher-dimensional families and to more general classes of noncommutative graded algebras.

Throughout the paper, $\mathbb{N}$ denotes the set of natural numbers including zero. The word ring means an associative ring with identity not necessarily commutative. $Z(R)$ denotes the center of the ring $R$. All vector spaces and algebras (always associative and with unit) are over a fixed field $\Bbbk$. $\Bbbk^{*}$ denotes the non-zero elements of $\Bbbk$. As usual, the symbols $\mathbb{R}$ and $\mathbb{C}$ denote the fields of real and complex numbers, respectively. 

\section{Preliminaries}\label{PreliminariesDifferentialsmoothnessofbi-quadraticalgebras}

We start by recalling the preliminaries on differential smoothness of algebras and Clifford algebras that are necessary for the rest of the paper.

\subsection{Differential smoothness of algebras}\label{DefinitionsandpreliminariesDSA}




We adopt the framework for differential smoothness developed by Brzezi\'nski and Sitarz in \cite[Section~2]{BrzezinskiSitarz2017} (see also \cite{Brzezinski2008, Brzezinski2014} for background).

\begin{definition}[{\cite[Section 2.1]{BrzezinskiSitarz2017}}]\label{defDGA}
\begin{enumerate}
    \item[\rm (i)] A {\em differential graded algebra} is a non-negatively graded algebra
    \[
       \Omega = \bigoplus_{n\ge 0}\Omega^n
    \]
    with product denoted by $\wedge$, together with a degree-one linear map
    \[
       d:\Omega^{\bullet} \longrightarrow \Omega^{\bullet +1}
    \]
    such that $d\circ d=0$ and, for all homogeneous $a\in \Omega^{\bullet}$ and all $b\in \Omega$,
    \[
      d(ab)=(da)b+(-1)^{\bullet}a\,db,
    \]
    that is, $d$ satisfies the graded Leibniz rule.

    \item[\rm (ii)] A differential graded algebra $(\Omega(A), d)$ is called a {\em calculus over an algebra} $A$ if $\Omega^0 (A) = A$ and, for each $n\in \mathbb{N}$,
    \[
       \Omega^n (A) = A\, dA \wedge dA \wedge \dotsb \wedge dA,
    \]
    where $dA$ appears $n$ times. This requirement is referred to as the {\em density condition}. We write $\Omega (A) = \bigoplus_{n\in \mathbb{N}} \Omega^{n}(A)$. By iterating the Leibniz rule, one also obtains
    \[
       \Omega^n (A) = dA \wedge dA \wedge \dotsb \wedge dA\, A.
    \]
    A differential calculus $\Omega (A)$ is said to be {\em connected} if ${\rm ker}\big(d\mid_{\Omega^0 (A)}\big) = \Bbbk$.

    \item[\rm (iii)] A calculus $(\Omega (A), d)$ is said to have {\em dimension} $n$ if $\Omega^n (A)\neq 0$ and $\Omega^m (A) = 0$ for all $m > n$. An $n$-dimensional calculus $\Omega (A)$ is said to {\em admit a volume form} if $\Omega^n (A)$ is isomorphic to $A$ as both a left and a right $A$-module.
\end{enumerate}
\end{definition}

\begin{remark}
The product $\wedge$ on $\Omega(A)$ should not be confused with the usual exterior (antisymmetric) wedge product; it is simply the notation used for the multiplication in the differential graded algebra.
\end{remark}

The existence of a right $A$-module isomorphism $\Omega^n (A)\cong A$ means that there is a distinguished element $\omega\in \Omega^n(A)$ which is a free generator of $\Omega^n (A)$ as a right $A$-module, so that every element of $\Omega^n(A)$ can be written uniquely as $\omega a$ with $a\in A$. If, in addition, $\omega$ is a free generator of $\Omega^n (A)$ as a left $A$-module, then $\omega$ is called a {\em volume form} on $\Omega(A)$.

The right $A$-module isomorphism $\Omega^n (A) \to A$ associated with a volume form $\omega$ will be denoted by $\pi_{\omega}$, that is,
\begin{equation}\label{BrzezinskiSitarz2017(2.1)}
\pi_{\omega} (\omega a) = a, \qquad \text{for all } a\in A.
\end{equation}

Using that $\Omega^n (A)$ is also isomorphic to $A$ as a left $A$-module, any free generator $\omega$ determines an algebra endomorphism $\nu_{\omega}$ of $A$ via
\begin{equation}\label{BrzezinskiSitarz2017(2.2)}
    a \omega = \omega \nu_{\omega} (a), \qquad a\in A.
\end{equation}
If $\omega$ is a volume form, then $\nu_{\omega}$ is in fact an algebra automorphism.

We now recall the basic ingredients of the {\em integral calculus} on $A$, which may be viewed as dual to its differential calculus (see \cite{Brzezinski2008, BrzezinskiElKaoutitLomp2010} for details).

Let $(\Omega (A), d)$ be a differential calculus on $A$. Each space of $n$-forms $\Omega^n (A)$ is naturally an $A$-bimodule. Denote by $\mathcal{I}_{n}A$ the right dual of $\Omega^{n}(A)$, namely the space of all right $A$-linear maps $\Omega^{n}(A)\rightarrow A$,
\[
\mathcal{I}_{n}A := {\rm Hom}_{A}(\Omega^{n}(A),A).
\]
Each $\mathcal{I}_{n}A$ is an $A$-bimodule with left and right actions given by
\[
    (a\cdot\phi\cdot b)(\omega)=a\,\phi(b\omega),\quad \text{for all } \phi \in \mathcal{I}_{n}A,\ \omega \in \Omega^{n}(A),\ a,b \in A.
\]

The direct sum $\mathcal{I}A = \bigoplus_{n} \mathcal{I}_n A$ carries a natural right $\Omega (A)$-module structure defined by
\begin{align}\label{BrzezinskiSitarz2017(2.3)}
    (\phi\cdot\omega)(\omega')=\phi(\omega\wedge\omega'),\quad \text{for all } \phi\in\mathcal{I}_{n + m}A,\ \omega\in \Omega^{n}(A),\ \omega' \in \Omega^{m}(A).
\end{align}

\begin{definition}[{\cite[Definition 2.1]{Brzezinski2008}}]
A {\em divergence} (or {\em hom-connection}) on $A$ is a linear map $\nabla: \mathcal{I}_1 A \to A$ such that
\begin{equation}\label{BrzezinskiSitarz2017(2.4)}
    \nabla(\phi \cdot a) = \nabla(\phi)\, a + \phi(da), \quad \text{for all } \phi \in \mathcal{I}_1 A,\ a \in A.
\end{equation}  
\end{definition}

Such a divergence extends canonically to a family of maps
\[
\nabla_n: \mathcal{I}_{n+1} A \to \mathcal{I}_{n} A,\qquad n\ge 0,
\]
defined by
\begin{equation}\label{BrzezinskiSitarz2017(2.5)}
\nabla_n(\phi)(\omega) = \nabla(\phi \cdot \omega) + (-1)^{n+1} \phi(d \omega), \quad \text{for all } \phi \in \mathcal{I}_{n+1}(A),\ \omega \in \Omega^n (A).
\end{equation}

Combining \eqref{BrzezinskiSitarz2017(2.4)} and \eqref{BrzezinskiSitarz2017(2.5)} yields the graded Leibniz rule
\[
    \nabla_n(\phi \cdot \omega) = \nabla_{m + n}(\phi) \cdot \omega + (-1)^{m + n} \phi \cdot d\omega,
\]
for all $\phi \in \mathcal{I}_{m + n + 1} A$ and $\omega \in \Omega^m (A)$ \cite[Lemma~3.2]{Brzezinski2008}. In particular, for $n=0$, if we identify ${\rm Hom}_A(A, M)$ canonically with $M$, the map $\nabla_0$ reduces to the classical Leibniz rule.



The graded module $\mathcal{I} A$ together with the maps $\nabla_n$ forms a chain complex, called the
{\em complex of integral forms} over $A$. The cokernel of $\nabla$ gives rise to a canonical map
\[
\Lambda: A \longrightarrow {\rm Coker}\,\nabla \;=\; A / {\rm Im}\,\nabla,
\]
which is referred to as the {\em integral on $A$ associated to} $\mathcal{I}A$.

Given a left $A$-module $X$ with action $a\cdot x$ for $a\in A$ and $x\in X$, and an algebra automorphism
$\nu$ of $A$, we denote by ${}^{\nu}X$ the $A$-module with the same underlying vector space $X$ but with
twisted left action
\[
  a\cdot x \;:=\; \nu(a)\,x,\qquad a\in A,\ x\in X.
\]

The next notion captures, in the noncommutative setting, a Hodge-type duality between the de~Rham complex
of differential forms and a dual complex of integral forms; see \cite[p.~112]{Brzezinski2015}.

\begin{definition}[{\cite[Definition 2.1]{BrzezinskiSitarz2017}}]
An $n$-dimensional differential calculus $(\Omega (A), d)$ is called {\em integrable} if there exists a
complex of integral forms $(\mathcal{I}A, \nabla)$ on $A$, an algebra automorphism $\nu$ of $A$, and
$A$-bimodule isomorphisms
\[
\Theta_k: \Omega^{k} (A) \longrightarrow {}^{\nu} \mathcal{I}_{n-k}A,\qquad k = 0, \dotsc, n,
\]
such that the following diagram commutes:
\[
\begin{tikzcd}
A \arrow{r}{d} \arrow{d}{\Theta_0} 
  & \Omega^{1} (A) \arrow{d}{\Theta_1} \arrow{r}{d} 
    & \Omega^2 (A)  \arrow{d}{\Theta_2} \arrow{r}{d} 
      & \dotsb \arrow{r}{d} 
        & \Omega^{n-1} (A) \arrow{d}{\Theta_{n-1}} \arrow{r}{d} 
          & \Omega^n (A)  \arrow{d}{\Theta_n} \\
{}^{\nu} \mathcal{I}_n A \arrow[swap]{r}{\nabla_{n-1}} 
  & {}^{\nu} \mathcal{I}_{n-1} A \arrow[swap]{r}{\nabla_{n-2}} 
    & {}^{\nu} \mathcal{I}_{n-2} A \arrow[swap]{r}{\nabla_{n-3}} 
      & \dotsb \arrow[swap]{r}{\nabla_{1}} 
        & {}^{\nu} \mathcal{I}_{1} A \arrow[swap]{r}{\nabla} 
          & {}^{\nu} A
\end{tikzcd}
\]
The element
\[
\omega := \Theta_n^{-1}(1) \in \Omega^n (A)
\]
is called an {\em integrating volume form}.
\end{definition}

Classical examples of algebras admitting integrable calculi include the algebra of complex matrices $M_n(\mathbb{C})$ with the $n$-dimensional calculus generated by derivations constructed by Dubois-Violette et al.~\cite{DuboisViolette1988, DuboisVioletteKernerMadore1990}, the quantum group $SU_q(2)$ equipped with Woronowicz’s three-dimensional left-covariant calculus \cite{Woronowicz1987} and the corresponding restriction to the quantum standard sphere. Further examples and a systematic discussion can be found in \cite{BrzezinskiElKaoutitLomp2010}.

The most interesting cases of differential calculi are those where $\Omega^k (A)$ are finitely generated and projective right or left (or both) $A$-modules \cite{Brzezinski2011}.

\begin{proposition}\label{BrzezinskiSitarz2017Lemmas2.6and2.7}
\begin{enumerate}
\item [\rm (1)] \cite[Lemma 2.6]{BrzezinskiSitarz2017} Consider $(\Omega (A), d)$ an integrable and $n$-dimensional calculus over $A$ with integrating form $\omega$. Then $\Omega^{k} (A)$ is a finitely generated projective right $A$-module if there exist a finite number of forms $\omega_i \in \Omega^{k} (A)$ and $\overline{\omega}_i \in \Omega^{n-k} (A)$ such that, for all $\omega' \in \Omega^{k} (A)$, we have that 
\begin{equation*}
\omega' = \sum_{i} \omega_i \pi_{\omega} (\overline{\omega}_i \wedge \omega').
\end{equation*}

\item [\rm (2)] \cite[Lemma 2.7]{BrzezinskiSitarz2017} Let $(\Omega (A), d)$ be an $n$-dimensional calculus over $A$ admitting a volume form $\omega$. Assume that for all $k = 1, \ldots, n-1$, there exists a finite number of forms $\omega_{i}^{k},\overline{\omega}_{i}^{k} \in \Omega^{k}(A)$ such that for all $\omega'\in \Omega^k(A)$, we have that
\begin{equation*}
\omega'=\displaystyle\sum_i\omega_{i}^{k}\pi_\omega(\overline{\omega}_{i}^{n-k}\wedge\omega')=\displaystyle\sum_i\nu_{\omega}^{-1}(\pi_\omega(\omega'\wedge\omega_{i}^{n-k}))\overline{\omega}_{i}^{k},
\end{equation*}

where $\pi_{\omega}$ and $\nu_{\omega}$ are defined by {\rm (}\ref{BrzezinskiSitarz2017(2.1)}{\rm )} and {\rm (}\ref{BrzezinskiSitarz2017(2.2)}{\rm )}, respectively. Then $\omega$ is an integral form and all the $\Omega^{k}(A)$ are finitely generated and projective as left and right $A$-modules.
\end{enumerate}
\end{proposition}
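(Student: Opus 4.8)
My plan is to deduce both statements from the dual basis characterization of finitely generated projective modules, after recording one elementary fact used throughout: for $\eta\in\Omega^n(A)$ one has $\pi_\omega(a\eta)=\nu_\omega(a)\,\pi_\omega(\eta)$, which follows by writing $\eta=\omega\,\pi_\omega(\eta)$ and applying \eqref{BrzezinskiSitarz2017(2.2)}. Thus $\pi_\omega$ is right $A$-linear and $\nu_\omega$-twisted left $A$-linear.

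For part (1), I would set $\phi_i\colon\Omega^k(A)\to A$, $\phi_i(\omega')=\pi_\omega(\overline{\omega}_i\wedge\omega')$; each $\phi_i$ is right $A$-linear (as $\pi_\omega$ is, and so is left wedge multiplication by $\overline{\omega}_i$), hence $\phi_i\in\mathcal{I}_kA$. The hypothesis then reads $\omega'=\sum_i\omega_i\,\phi_i(\omega')$ for all $\omega'$, which is precisely the assertion that $(\{\omega_i\},\{\phi_i\})$ is a dual basis; so $\Omega^k(A)$ is a finitely generated projective right $A$-module.

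For part (2), I would first observe that the first of the two displayed identities is exactly the hypothesis of part (1), whence $\Omega^k(A)$ is finitely generated projective on the right for $1\le k\le n-1$; the cases $k=0,n$ being immediate since $\Omega^0(A)=A$ and $\pi_\omega\colon\Omega^n(A)\to A$ is an isomorphism. For the left-module statement I would use that the maps $\psi_i\colon\Omega^k(A)\to A$, $\psi_i(\omega')=\nu_\omega^{-1}\big(\pi_\omega(\omega'\wedge\omega_i^{\,n-k})\big)$, are left $A$-linear by the twisted linearity of $\pi_\omega$ recorded above; then the second displayed identity $\omega'=\sum_i\psi_i(\omega')\,\overline{\omega}_i^{\,k}$ exhibits $(\{\overline{\omega}_i^{\,k}\},\{\psi_i\})$ as a dual basis for $\Omega^k(A)$ as a left $A$-module. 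Hence all $\Omega^k(A)$ are finitely generated projective on both sides.

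The remaining point — that $\omega$ is an integrating volume form, i.e.\ that $(\Omega(A),d)$ is integrable with this $\omega$ — I would handle by constructing the isomorphisms $\Theta_k$ explicitly: define $\Theta_k\colon\Omega^k(A)\to{}^{\nu_\omega}\mathcal{I}_{n-k}A$ by $\Theta_k(\omega')(\omega'')=\pi_\omega(\omega'\wedge\omega'')$ for $\omega''\in\Omega^{n-k}(A)$. Right $A$-linearity of $\pi_\omega$ makes $\Theta_k(\omega')$ lie in $\mathcal{I}_{n-k}A$; associativity of $\wedge$ makes $\Theta_k$ right $A$-linear; the twisted left linearity of $\pi_\omega$ makes $\Theta_k$ left $A$-linear into the $\nu_\omega$-twisted module; so each $\Theta_k$ is a morphism of $A$-bimodules. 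It is injective by the second displayed identity (if $\pi_\omega(\omega'\wedge\omega'')=0$ for all $\omega''$, specialize to $\omega''=\omega_i^{\,n-k}$ to get $\omega'=0$) and surjective because, given $\phi\in\mathcal{I}_{n-k}A$, the form $\omega':=\sum_i\nu_\omega^{-1}\big(\phi(\omega_i^{\,n-k})\big)\,\overline{\omega}_i^{\,k}$ satisfies $\Theta_k(\omega')=\phi$ — this one checks by expanding $\Theta_k(\omega')(\omega'')$ with the twisted linearity of $\pi_\omega$, rewriting $\omega''$ via the first displayed identity in degree $n-k$, and invoking right $A$-linearity of $\phi$. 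For $k=0,n$ bijectivity is direct, and $\Theta_n$ is identified with $\pi_\omega$ under $\mathcal{I}_0A\cong A$, so $\Theta_n^{-1}(1)=\omega$. Finally, using the compatibility $\Theta_k(\omega')\cdot\eta=\Theta_{k+m}(\omega'\wedge\eta)$ for $\eta\in\Omega^m(A)$ (immediate from \eqref{BrzezinskiSitarz2017(2.3)}), I would transport $d$ along the $\Theta_k$: the map $\nabla\colon\mathcal{I}_1A\to A$ given by $\nabla(\phi):=(-1)^{n-1}\pi_\omega\big(d\,\Theta_{n-1}^{-1}(\phi)\big)$ is a divergence in the sense of \eqref{BrzezinskiSitarz2017(2.4)}, its canonical extensions \eqref{BrzezinskiSitarz2017(2.5)} satisfy $\nabla_{n-k-1}\circ\Theta_k=(-1)^{n-1}\,\Theta_{k+1}\circ d$, and replacing $\Theta_k$ by $(-1)^{k(n-1)}\Theta_k$ (which alters neither the bimodule-isomorphism property nor the value $\Theta_n^{-1}(1)=\omega$, as $n(n-1)$ is even) makes the required ladder commute. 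I expect this last step to be the main obstacle: reconciling the graded Leibniz rule for $d$, the right $\Omega(A)$-action \eqref{BrzezinskiSitarz2017(2.3)} on $\mathcal{I}A$, the $\nu_\omega$-twist and the degree-dependent signs so that the transported maps really do form a complex of integral forms; the rest is formal. With $\nabla$ and the normalized $\Theta_k$ in hand, $\omega=\Theta_n^{-1}(1)$ is an integrating volume form, so $(\Omega(A),d)$ is integrable and $\omega$ is an integral form.
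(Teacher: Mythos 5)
The paper does not prove this proposition: it is quoted verbatim from Brzezi\'nski--Sitarz (Lemmas 2.6 and 2.7 of \cite{BrzezinskiSitarz2017}) and used as a black box, so there is no internal proof to compare against. Your argument is correct and essentially reconstructs the original one: part (1) is exactly the dual-basis criterion with $\phi_i=\pi_\omega(\overline{\omega}_i\wedge-)$, the left-module half of part (2) uses the twisted identity $\pi_\omega(a\eta)=\nu_\omega(a)\pi_\omega(\eta)$ in the same way, and the maps $\Theta_k(\omega')=\pi_\omega(\omega'\wedge-)$ are the standard Poincar\'e-duality isomorphisms. The one step you flag as a potential obstacle does close up: writing $\nabla(\phi)=(-1)^{n-1}\pi_\omega\bigl(d\,\Theta_{n-1}^{-1}(\phi)\bigr)$ and expanding $\nabla_{n-k-1}(\Theta_k(\omega'))(\eta)$ via \eqref{BrzezinskiSitarz2017(2.5)} and the graded Leibniz rule produces the two terms $(-1)^{n-1+k}\Theta_k(\omega')(d\eta)$ and $(-1)^{n-k}\Theta_k(\omega')(d\eta)$, which cancel because their exponents differ by the odd number $2k-1$; this leaves $\nabla_{n-k-1}\circ\Theta_k=(-1)^{n-1}\Theta_{k+1}\circ d$, and your rescaling by $(-1)^{k(n-1)}$ (harmless at $k=0$ and $k=n$) makes the ladder commute. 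The only minor caveat is that bijectivity of $\Theta_0$ uses that $\nu_\omega$ is an automorphism, which holds because $\omega$ is a two-sided volume form, as noted after \eqref{BrzezinskiSitarz2017(2.2)}.
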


Brzezi\'nski and Sitarz \cite[p.~421]{BrzezinskiSitarz2017} pointed out that, in order to relate the integrability of a differential graded algebra $(\Omega A, d)$ to the underlying algebra $A$, one needs to compare the dimension of the differential calculus with an appropriate notion of dimension for $A$. Since the algebras under consideration often arise as deformations of coordinate rings of affine varieties, the most natural invariant to use is the {\em Gelfand-Kirillov dimension}, introduced by Gelfand and Kirillov in \cite{GelfandKirillov1966, GelfandKirillov1966b}. Concretely, if $A$ is an affine $\Bbbk$-algebra, its {\em Gelfand--Kirillov dimension}, denoted ${\rm GKdim}(A)$, is defined by
\[
{\rm GKdim}(A)\;:=\;\underset{n\to \infty}{\limsup}\;
\frac{\log\bigl(\dim_{\Bbbk} V^{n}\bigr)}{\log n},
\]
where $V$ is any finite-dimensional subspace of $A$ that generates $A$ as an algebra. This quantity does not depend on the particular choice of $V$. For a non-affine algebra $A$, one sets ${\rm GKdim}(A)$ to be the supremum of the Gelfand--Kirillov dimensions of all affine subalgebras of $A$. An affine domain of Gelfand--Kirillov dimension zero is exactly a division ring that is finite-dimensional over its center.

Similarly, an affine domain of Gelfand--Kirillov dimension one over $\Bbbk$ is a finite module over its center and hence satisfies a polynomial identity. In this sense, ${\rm GKdim}$ measures how far $A$ is from being finite-dimensional. We refer the reader to the monograph of Krause and Lenagan
\cite{KrauseLenagan2000} for a detailed account of this invariant.

With these preliminaries in place, we recall the central notion of this work.

\begin{definition}[{\cite[Definition 2.4]{BrzezinskiSitarz2017}}]\label{BrzezinskiSitarz2017Definition2.4}
Let $A$ be an affine algebra with integer Gelfand--Kirillov dimension ${\rm GKdim}(A)=n$. We say that $A$
is {\em differentially smooth} if there exists an $n$-dimensional, connected, integrable differential
calculus $(\Omega(A), d)$ over $A$.
\end{definition}

Thus, by Definition~\ref{BrzezinskiSitarz2017Definition2.4}, a differentially smooth algebra is endowed not only with a well-behaved noncommutative differential structure, but also with a canonical notion of integration compatible with this calculus \cite[p.~2414]{BrzezinskiLomp2018}.

\begin{example}
As already mentioned in the Introduction, many noncommutative algebras have been shown to be differentially smooth (see, for instance, \cite{Brzezinski2015, BrzezinskiElKaoutitLomp2010, BrzezinskiLomp2018, BrzezinskiSitarz2017, Karacuha2015, KaracuhaLomp2014, ReyesSarmiento2022}). A basic example is the polynomial algebra $\Bbbk[x_1,\dotsc,x_n]$, which has Gelfand--Kirillov dimension $n$, and whose standard exterior calculus furnishes an $n$-dimensional integrable differential calculus; hence $\Bbbk[x_1,\dotsc,x_n]$ is differentially smooth. Furthermore, it is known from \cite{BrzezinskiElKaoutitLomp2010} that the coordinate algebras of the quantum group $SU_q(2)$, the standard quantum Podle\'s sphere and the quantum Manin plane are also differentially smooth.
\end{example}

\begin{remark}
As one might expect, there are also natural examples of algebras which fail to be differentially smooth. Consider, for instance, the commutative algebra $A=\mathbb{C}[x,y]/\langle xy\rangle$. A contradiction argument shows that there is no one-dimensional connected integrable calculus over $A$, and therefore $A$ cannot be differentially smooth \cite[Example 2.5]{BrzezinskiSitarz2017}.
\end{remark}

\subsection{Clifford algebras}\label{BiquadraticalgebrasPBWbasis}

In this part, the basic notions for defining graded Clifford algebras will be given, along with some important properties.

\begin{definition}[{\cite[Definition 2.1]{Veerapen2017}}]\label{D1110}
Let $M_{1},\ldots,M_{n} \in M_{n}(\Bbbk )$ be symmetric matrices. A \textit{graded Clifford algebra} is a $\Bbbk$-algebra $C$ on degree-one generators $x_{1},\ldots,x_{n}$ and on degree-two generators $y_{1},\ldots,y_{n}$ with the following defining relations:

\begin{enumerate}
\item [\rm (i)] $x_{i}x_{j}+x_{j}x_{i}$ = $\sum_{k=1}^{n}\left(M_{k}\right)_{ij}y_{k}$, for all $i,j = 1,\ldots,n$;
\item [\rm (ii)] $y_{k}$ is central, for all $k = 1,...,n$.
\end{enumerate}
\end{definition}

Next, we show a simple example of an algebra that meets both conditions to be graded Clifford algebra.

\begin{example}[{\cite[Example 1]{Veerapen2017}}]\label{E1111}
Suppose that we have  $
M_{1} = \left[ \begin{array}{ll}
2 & \lambda  \\ \lambda & 0  \\
\end{array}\right]
$ and $
M_{2} = \left[ \begin{array}{ll}
0 & 0  \\ 0 & 1  \\
\end{array}\right]
$, where $\lambda \in \Bbbk $. Let $C$ be the graded $\Bbbk $-algebra on degree-one generators $x_{1}$, $x_{2}$ and on degree-two generators $y_{1}$, $y_{2}$ with definitions given by Definition \ref{D1110} (i). The defining relations of $C$ are
\[
2x_{1}^{2} = 2y_{1}, \ x_{2}^{2} = y_{2}, \text{ and } x_{1}x_{2}+x_{2}x_{1} = \lambda y_{1} = \lambda x_{1}^{2}.
\]
\end{example}

In order to define graded skew Clifford algebras, we must first have the generalized concept of symmetric matrix.

\begin{definition}[{\cite[Definition 3.2]{Veerapen2017}}]\label{D1112}
For $\{i,j\} \subset \{1,\ldots,n\}$, let $\mu _{ij} \in \Bbbk ^{\times}$ such that $\mu _{ij}\mu _{ji} = 1$, for all $i,j$ where $i \not = j$, and $\mu _{ii} = 1$, for all $i=1,\ldots, n$. We write $\mu = \left(\mu _{ij}\right) \in M_n(\Bbbk )$.

A matrix $M \in M(n,\Bbbk )$ is called $\mu$-\textit{symmetric}, if $(M)_{ij}=\mu _{ij}(M)_{ji}$, for all $i,j = 1,\ldots,n$. We write $M^{\mu}(n,\Bbbk )$ for the set of $\mu$-symmetric matrices in $M(n,\Bbbk )$.
\end{definition}

\begin{definition}[{\cite[Definition 3.2]{Veerapen2017}}]\label{D1114} A \textit{graded skew Clifford algebra} denoted by $C = C(\mu,M_{1},\ldots,M_{n})$ associated to $\mu$ and $M_{1},\ldots,M_{n} \in M^{\mu}(n,\Bbbk )$ is a $\mathbb{Z}$-graded $\Bbbk $-algebra on degree-one generator $x_{1},\ldots, x_{n}$ and on degree-two generators $y_{1},\ldots,y_{n}$ with defining relations given by:
\begin{enumerate}
\item [\rm (i)] $x_{i}x_{j}+\mu _{ij} x_{j}x_{i}$ = $\sum_{k=1}^{n}\left(M_{k}\right)_{ij}y_{k}$, for all $i,j = 1,\ldots,n$;
\item [\rm (ii)] the existence of a normalizing sequence $\{y_{1}^{'},\ldots,y_{n}^{'}\}$ that spans $\Bbbk y_{1}+ \cdots + \Bbbk y_{n}$.
\end{enumerate}

\end{definition}

\begin{example}[{\cite[Examples 2.5]{Vancliff2015a}}]\label{E1115} (Quantum affine plane) Let $n = 2$, and consider $
M_{1} = \left[ \begin{array}{cc}
2 & 0  \\ 0 & 0  \\
\end{array}\right]
$ and $
M_{2} = \left[ \begin{array}{cc}
0 & 0 \\ 0 & 2  \\
\end{array}\right]
$. The degree-2 relations of $A(\mu, M_{1}, M_{2})$ have the form
\begin{equation*}
2x_{1}^{2} = 2y_{1}, \qquad 2x_{2}^{2} = 2y_{2}, \qquad x_{1}x_{2} + \mu_{12}x_{2}x_{1} = 0, 
\end{equation*}
so that $\Bbbk \langle x_{1}, x_{2}\rangle/\langle x_{1}x_{2} + \mu_{12}x_{2}x_{1} \rangle$ is a graded skew Clifford algebra with isomorphism $M_i \mapsto 2x_i^2$, for $ i = 1, 2$.
\end{example}

\begin{lemma}[{\cite[Lemma 1.13]{Cassidy2010}}]\label{lemmaindepent}
Let $C$ be a graded skew Clifford algebra. We set $Y=\sum_{k=1}^nM_ky_k$, so the entries $Y_{ij}$ of $Y$ satisfy $Y_{ij}=\mu_{ij}Y_{ji}$ for all $1\leq i,j \leq n$. With this notation, $x_ix_j+\mu_{ij}x_jx_i=Y_{ij}$ in $C$ for all $1\leq i,j \leq n$. So, the following are equivalent:
\begin{enumerate}
    \item [\rm (i)] $y_i\in (C_1)^2$ for all $1\leq i \leq n$; 
    \item [\rm (ii)] $\text{dim}\left(\sum_{i,j=1}^n\Bbbk Y_{ij}\right)=n$;
    \item [\rm (iii)] $M_1, \ldots, M_n$ are linearly independent.
\end{enumerate}
\end{lemma}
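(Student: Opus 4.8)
The plan is to prove the chain of equivalences (i) $\Leftrightarrow$ (ii) $\Leftrightarrow$ (iii) by elementary linear algebra, after first unwinding the definitions so that the role of the $y_k$'s becomes transparent. The starting observation is that, by condition (ii) of Definition~\ref{D1114}, the span $\Bbbk y_1 + \cdots + \Bbbk y_n$ is a normalizing sequence inside $C$, and $x_ix_j + \mu_{ij}x_jx_i = Y_{ij} \in \Bbbk y_1 + \cdots + \Bbbk y_n$ for all $i,j$. The key structural fact I would isolate at the outset is that $\dim_\Bbbk(\Bbbk y_1 + \cdots + \Bbbk y_n) = n$, i.e. the $y_k$ are themselves linearly independent: this follows from the Hilbert series / PBW-type description of graded skew Clifford algebras (the generators $y_1,\dots,y_n$ together with $x_1,\dots,x_n$ obey the expected dimension count in each graded degree, so no nontrivial linear combination of the $y_k$ can vanish). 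This is the fact I would invoke from the cited references on graded skew Clifford algebras rather than reprove.

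For (ii) $\Leftrightarrow$ (iii): consider the linear map $\Bbbk^n \to \bigoplus_{i\le j}\Bbbk$ (or into the space of $\mu$-symmetric matrices) sending the $k$-th basis vector to $M_k$, equivalently the assignment $(c_1,\dots,c_n) \mapsto \sum_k c_k M_k$. Since $Y = \sum_k M_k y_k$ and the $y_k$ are linearly independent, the entries $Y_{ij} = \sum_k (M_k)_{ij} y_k$ span a subspace of $\Bbbk y_1+\cdots+\Bbbk y_n$ whose dimension equals the rank of the $n \times \binom{n+1}{2}$ matrix whose rows are the (upper-triangular parts of the) $M_k$. That rank is $n$ precisely when the $M_k$ are linearly independent, giving (ii) $\Leftrightarrow$ (iii) directly. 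One must be slightly careful to work with the $\mu$-symmetric structure so that only the "independent" entries $Y_{ij}$ with $i \le j$ are counted, but this is bookkeeping.

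For (i) $\Leftrightarrow$ (ii): the inclusion $\sum_{i,j}\Bbbk Y_{ij} \subseteq (C_1)^2$ is immediate from $Y_{ij} = x_ix_j + \mu_{ij}x_jx_i$. Conversely, $(C_1)^2$ is spanned by the products $x_ix_j$, and modulo the defining relations every such product is congruent to a $\Bbbk$-linear combination of the $Y_{ij}$ (using $x_ix_j = Y_{ij} - \mu_{ij}x_jx_i$ to reduce, and $x_i^2 = Y_{ii}$ directly since $\mu_{ii}=1$ forces $2x_i^2 = Y_{ii}$ when $\mathrm{char}\,\Bbbk \ne 2$; in characteristic two one uses the $\mu$-symmetry and the normalizing sequence more carefully, or simply notes $Y_{ii} = 2x_i^2$ is the defining relation). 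Hence $(C_1)^2 = \sum_{i,j}\Bbbk Y_{ij} \subseteq \Bbbk y_1 + \cdots + \Bbbk y_n$. Therefore each $y_i$ lies in $(C_1)^2$ if and only if the reverse inclusion $\Bbbk y_1+\cdots+\Bbbk y_n \subseteq (C_1)^2$ holds, which (since both sides sit inside the $n$-dimensional space $\Bbbk y_1+\cdots+\Bbbk y_n$) is equivalent to $\dim_\Bbbk\big(\sum_{i,j}\Bbbk Y_{ij}\big) = n$, i.e. to (ii).

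The main obstacle I anticipate is not any single deep step but rather making rigorous the claim that the $y_k$ are linearly independent and that $(C_1)^2$ is \emph{exactly} $\sum_{i,j}\Bbbk Y_{ij}$ (and not something larger). Both rest on the regularity/Hilbert-series properties of graded skew Clifford algebras established in \cite{Cassidy2010}; the honest proof should cite the relevant structural result (that $C$ has the Hilbert series of a polynomial ring on $n$ degree-one generators, so that $\dim_\Bbbk (C_1)^2 \le \binom{n+1}{2}$ and the degree-two part is spanned by the $Y_{ij}$ together with whatever $y_k$ are not already in $(C_1)^2$). Once that input is in hand, everything else is the rank computation and the two inclusions sketched above, and the three conditions fall out as three equivalent ways of saying that the $n$-dimensional space of $y$'s is absorbed into the quadratic part of the algebra.
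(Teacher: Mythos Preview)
The paper does not prove this lemma; it is quoted verbatim from \cite[Lemma~1.13]{Cassidy2010}, so there is no in-paper argument to compare against. That said, your proposal has a genuine gap in the direction (i) $\Rightarrow$ (ii). You assert that $(C_1)^2=\sum_{i,j}\Bbbk Y_{ij}$, arguing that the relation $x_ix_j=Y_{ij}-\mu_{ij}x_jx_i$ ``reduces'' every product to a combination of $Y$'s. It does not: it only trades $x_ix_j$ for $x_jx_i$ plus a $Y$-term, so the mixed products never disappear. In the quantum plane of Example~\ref{E1115} one has $Y_{12}=0$, hence $\sum_{i,j}\Bbbk Y_{ij}=\Bbbk y_1+\Bbbk y_2$ is two-dimensional, whereas $(C_1)^2$ also contains $x_1x_2$ and is three-dimensional. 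Your subsequent claim that ``both sides sit inside the $n$-dimensional space $\Bbbk y_1+\cdots+\Bbbk y_n$'' is therefore false for $(C_1)^2$.

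What is actually needed is the identity $(C_1)^2\cap W=\sum_{i,j}\Bbbk Y_{ij}$, where $W=\sum_k\Bbbk y_k$. One inclusion is obvious. For the other, note that the degree-two part of the defining ideal is spanned by $r_{ij}=x_ix_j+\mu_{ij}x_jx_i-Y_{ij}$ for $i\le j$; if $\sum b_{pq}x_px_q=\sum c_ky_k$ in $C$, lifting to the free algebra and comparing the $x_px_q$-coefficients forces this relation to be $\sum_{i\le j}a_{ij}r_{ij}$, whence the $y$-part is exactly $\sum_{i\le j}a_{ij}Y_{ij}$. The same comparison (with all $b_{pq}=0$) shows that the $y_k$ are linearly independent in $C$ when $\mathrm{char}\,\Bbbk\ne 2$, which is precisely the structural input you wanted to cite. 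With this corrected intersection statement and $\dim W=n$, the equivalence (i) $\Leftrightarrow$ (ii) follows; your rank argument for (ii) $\Leftrightarrow$ (iii) is then correct as written.
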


\section{Differential and integral calculus}\label{Differentialandintegralcalculusbi-quadraticalgebras}

In this section we investigate the differential smoothness of graded skew Clifford algebras.

\begin{theorem}\label{Theo1} Let $C$ a graded skew Clifford algebra. If $C$ satisfies conditions in Lemma \ref{lemmaindepent} and
\[
 (M_l)_{ij}=0 \ \text{or} \ \mu_{lk}^2=\mu_{ki}\mu_{jk}, \ 1 \leq k,l \leq n, \ 1 \leq i<j \leq n,
\]
then $C$ is differentially smooth.
\end{theorem}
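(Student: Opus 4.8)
The plan is to produce an $n$-dimensional, connected, integrable differential calculus over $C$, where $n$ is the number of degree-one generators. \emph{First}, since $C$ satisfies the equivalent conditions of Lemma~\ref{lemmaindepent}, the degree-two generators $y_k$ lie in $(C_1)^2$, so $C$ is generated in degree one; as a graded skew Clifford algebra in this regime it is a quadratic Artin--Schelter regular algebra of global dimension $n$, hence ${\rm GKdim}(C)=n$, which is the integer required in Definition~\ref{BrzezinskiSitarz2017Definition2.4}. \emph{Next}, I would build a first-order calculus $\Omega^1(C)$ that is free of rank $n$ as a left $C$-module on symbols $dx_1,\dots,dx_n$, with right $C$-action determined by commutation rules of the shape
\[
x_i\,dx_j \;=\; \sum_{k,l} c^{\,kl}_{ij}\,(dx_k)\,x_l \;+\;\big(\text{inhomogeneous terms coming from }dY_{ij}\big),
\]
obtained by applying $d$ to the defining relations $x_ix_j+\mu_{ij}x_jx_i=Y_{ij}$ and solving for the products written in the ``wrong'' order (the inhomogeneous part appears because $Y_{ij}$, and hence $dy_k$, is quadratic in the $x$'s). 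The hypothesis that, for all indices, $(M_l)_{ij}=0$ or $\mu_{lk}^2=\mu_{ki}\mu_{jk}$ is exactly the consistency condition ensuring that this family of relations is confluent, so that the two-sided ideal it generates does not collapse $\Omega^1(C)$ and the latter has the expected Hilbert series; one then checks that $d\colon C\to\Omega^1(C)$, $x_i\mapsto dx_i$, extends by the Leibniz rule to a well-defined derivation satisfying the density condition.

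\emph{Then} I would extend to higher degrees by adjoining anticommutation-type relations $dx_i\wedge dx_j=-\eta_{ij}\,(dx_j\wedge dx_i)$ and $dx_i\wedge dx_i=0$, where the scalars $\eta_{ij}$ are pinned down by requiring compatibility with $d^2=0$ and with the first-order commutation rules; a further diamond-lemma (equivalently, Hilbert-series) argument shows that $\Omega^k(C)$ is free of rank $\binom{n}{k}$ as a left and as a right $C$-module, so that $\Omega^m(C)=0$ for $m>n$ and $\Omega^n(C)\cong C$ on both sides, with volume form $\omega:=dx_1\wedge\cdots\wedge dx_n$. Connectedness, ${\rm ker}(d|_{C})=\Bbbk$, follows because $d$ is injective on $C_1$ (the $dx_i$ form part of a free basis) and, by the Leibniz rule and induction on degree, no nonconstant homogeneous element is closed. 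At this stage one also computes the algebra automorphism $\nu_\omega$ from $a\,\omega=\omega\,\nu_\omega(a)$ by transporting each $x_i$ past $dx_1\wedge\cdots\wedge dx_n$ using the first-order relations; on the degree-one generators it is a diagonal rescaling $\nu_\omega(x_i)=\sigma_i\,x_i$ with $\sigma_i\in\Bbbk^{*}$ built from the $\mu_{ij}$.

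\emph{Finally}, for integrability I would invoke Proposition~\ref{BrzezinskiSitarz2017Lemmas2.6and2.7}(2) rather than constructing the isomorphisms $\Theta_k$ by hand: it suffices to exhibit, for each $k=1,\dots,n-1$, finite families $\omega^k_i,\overline{\omega}^{\,k}_i\in\Omega^k(C)$ with
\[
\omega'=\sum_i \omega^k_i\,\pi_\omega\!\left(\overline{\omega}^{\,n-k}_i\wedge\omega'\right)=\sum_i \nu_\omega^{-1}\!\left(\pi_\omega\!\left(\omega'\wedge\omega^{\,n-k}_i\right)\right)\overline{\omega}^{\,k}_i
\]
for every $\omega'\in\Omega^k(C)$. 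The natural choice lets $\omega^k_i$ run over the ordered monomials $dx_{j_1}\wedge\cdots\wedge dx_{j_k}$ and takes $\overline{\omega}^{\,n-k}_i$ to be the complementary monomial rescaled so that $\overline{\omega}^{\,n-k}_i\wedge\omega^k_i=\omega$; then $\pi_\omega(\overline{\omega}^{\,n-k}_i\wedge\omega')$ extracts the coefficient of the $i$-th monomial in $\omega'$, and both displayed identities reduce to bookkeeping with the scalars $\eta_{ij}$ and with the diagonal automorphism $\nu_\omega$. Once the hypotheses of Proposition~\ref{BrzezinskiSitarz2017Lemmas2.6and2.7}(2) are verified, $\omega$ is an integrating volume form, the calculus is integrable, and therefore $C$ is differentially smooth.

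The step I expect to be the main obstacle is the consistency of the first-order relations together with their higher-degree consequences: one must show that the commutation rules dictated by differentiating the skew-Clifford relations, \emph{including} the inhomogeneous terms produced by the $y_k$ being quadratic in the $x$'s, close up without forcing any $dx_i$ to vanish or introducing a spurious identity among the generators. This is precisely where the dichotomy $(M_l)_{ij}=0$ or $\mu_{lk}^2=\mu_{ki}\mu_{jk}$ is needed, and carrying the diamond-lemma / Hilbert-series verification through in the presence of these inhomogeneities is the technical heart of the argument.
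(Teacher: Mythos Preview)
Your global outline---build an $n$-dimensional calculus, verify connectedness, exhibit a volume form, then apply Proposition~\ref{BrzezinskiSitarz2017Lemmas2.6and2.7}(2)---matches the paper's. The divergence is in how $\Omega^1(C)$ is constructed and, correspondingly, in where the hypothesis actually enters.

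The paper does \emph{not} obtain the bimodule relations by differentiating the skew-Clifford relations and dragging along inhomogeneous $dY_{ij}$-terms. Instead it first writes down, for each $i$, the diagonal map $\nu_{x_i}(x_j):=-\mu_{ji}\,x_j$ on generators, and the hypothesis ``$(M_l)_{ij}=0$ or $\mu_{lk}^2=\mu_{ki}\mu_{jk}$'' is \emph{exactly} the condition for each $\nu_{x_i}$ to extend to an algebra automorphism of $C$ (i.e., to preserve the relations $x_ix_j+\mu_{ij}x_jx_i=\sum_k(M_k)_{ij}x_k^2$). With these commuting automorphisms in hand, $\Omega^1(C)$ is declared to be the free right $C$-module on $dx_1,\dots,dx_n$ with left action $p\,dx_i:=dx_i\,\nu_{x_i}(p)$; the resulting commutation rules are purely diagonal, $x_j\,dx_i=-\mu_{ji}\,dx_i\,x_j$. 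In particular $x_k\,dx_k=-dx_k\,x_k$, so $d(x_k^2)=0$, and applying Leibniz to $x_ix_j+\mu_{ij}x_jx_i=Y_{ij}$ yields $0=0$: the inhomogeneous terms you anticipate never appear. Thus the ``main obstacle'' you flag---confluence in the presence of inhomogeneous pieces---is bypassed entirely, and no diamond-lemma argument is needed for $\Omega^1$. This is the twisted-multiderivation construction of \cite{BrzezinskiElKaoutitLomp2010}, and recognising it is the key simplification you are missing.

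The remainder of your plan then aligns with the paper: because the $\nu_{x_i}$ commute, the higher $\Omega^k(C)$ are free of rank $\binom{n}{k}$ with diagonal wedge relations, $\omega=dx_1\wedge\cdots\wedge dx_n$ is a volume form with $\nu_\omega=\nu_{x_1}\circ\cdots\circ\nu_{x_n}$, connectedness is read off from explicit partial-derivative formulas on ordered monomials, and the complementary-monomial dual bases you describe are precisely those used to verify Proposition~\ref{BrzezinskiSitarz2017Lemmas2.6and2.7}(2). One further caveat: your justification of ${\rm GKdim}(C)=n$ via Artin--Schelter regularity is not automatic for an arbitrary graded skew Clifford algebra satisfying only Lemma~\ref{lemmaindepent}; the paper instead works implicitly with the PBW-type monomials $x_1^{l_1}\cdots x_n^{l_n}$.
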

\begin{proof}
Consider the following automorphisms for $1 \leq i<j \leq n$:
\begin{align}
   \nu_{x_i}(x_i) = &\ -x_i, & \nu_{x_i}(x_j) = &\ -\mu_{ji}x_j,\quad {\rm and} \label{Auto1} \\ 
    \nu_{x_j}(x_i) = &\ -\mu_{ij}x_i, & \nu_{x_j}(x_j) = &\ -x_j. \label{Auto2} 
\end{align}
The maps $\nu_{x_i}$, $1 \leq i \leq n$ can be extended to an algebra homomorphism of $C$ if and only if the definitions of homomorphism over the generators $x_j$, $1 \leq j \leq n$ respect relations of Definition \ref{D1114}, i.e.
\begin{align*}
   \nu_{x_i}(x_i)\nu_{x_i}(x_j)+\mu_{ij}\nu_{x_i}(x_j)\nu_{x_i}(x_i)  & =  \sum_{k=1}^n\left(M_k\right)_{ij}\nu_{x_i}(x_k)\nu_{x_i}(x_k) \\
   \mu_{ji}x_ix_j+\mu_{ij}\mu_{ji}x_jx_i & = \sum_{k=1}^{n}\left(M_k\right)_{ij}\mu_{ki}^2x_k^2 \\
   x_ix_j+\mu_{ij}x_jx_i & = \sum_{k=1}^{n}\left(M_k\right)_{ij}\mu_{ij}\mu_{ki}^2x_k^2
\end{align*}
So, we have

\begin{align*}
    & \sum_{k=1}^n\left(M_k\right)_{ij}x_k^2  =\sum_{k=1}^{n}\left(M_k\right)_{ij}\mu_{ij}\mu_{ki}^2x_k^2 \\
     & \sum_{k=1}^{n}(M_k)_{ij}(1-\mu_{ij}\mu_{ki}^2)x_k^2 = 0
\end{align*}
Then,

\begin{align*}
    (M_k)_{ij}(1-\mu_{ij}\mu_{ki}^2) & = 0 \ \text{for} \ 1\leq k \leq n.
\end{align*}
With these two equations, we obtain the conditions
\begin{align}
    (M_i)_{ij}=0 \ & \text{or} \ \mu_{ij}=1, \notag \\
    (M_j)_{ij}=0 \ & \text{or} \ \mu_{ij}=1, \label{Eq1} \\
    (M_k)_{ij}=0 \ & \text{or} \ \mu_{ki}^2=\mu_{ji}, \text{ for } 1\leq k\leq n, k\ne i,j \notag.
\end{align}
Now, we do the same with $\nu_{x_j}$. 
\begin{align*}
   \nu_{x_j}(x_i)\nu_{x_j}(x_j)+\mu_{ij}\nu_{x_j}(x_j)\nu_{x_j}(x_i)  & =  \sum_{k=1}^n\left(M_k\right)_{ij}\nu_{x_j}(x_k)\nu_{x_j}(x_k) \\
   \mu_{ij}x_ix_j+\mu_{ij}\mu_{ij}x_jx_i & = \sum_{k=1}^{n}\left(M_k\right)_{ij}\mu_{kj}^2x_k^2 \\
   x_ix_j+\mu_{ij}x_jx_i & = \sum_{k=1}^{n}\left(M_k\right)_{ij}\mu_{ji}\mu_{kj}^2x_k^2.
\end{align*}
So, we have

\begin{align*}
    & \sum_{k=1}^n\left(M_k\right)_{ij}x_k^2  =\sum_{k=1}^{n}\left(M_k\right)_{ij}\mu_{ji}\mu_{kj}^2x_k^2 \\
     & \sum_{k=1}^{n}(M_k)_{ij}(1-\mu_{ji}\mu_{kj}^2)x_k^2 = 0
\end{align*}
Then,

\begin{align*}
    (M_k)_{ij}(1-\mu_{ji}\mu_{kj}^2) & = 0 \ \text{for} \ 1\leq k \leq n.
\end{align*}
The conditions obtained here are the same as those obtained with $\nu_{x_i}$.

Finally, we take $\nu_{x_k}$ when $k \ne i,j$. Here, the map $\nu_{x_k}$ can be extended to an algebra homomorphism of $C$ if and only if the equalities
\begin{align}
    (M_k)_{ij}=0 \ & \text{or} \ \mu_{ki}\mu_{jk}= 1, \label{Eq2} \\
    (M_l)_{ij}=0 \ & \text{or} \ \mu_{lk}^2=\mu_{ki}\mu_{jk}, \ \text{for} \ 1 \leq l \leq n, l \ne k \notag
\end{align}
are satisfied.

The equations \ref{Eq1} and \ref{Eq2} can be written as follows
\begin{align}
    (M_l)_{ij}=0 \ \text{or} \ \mu_{lk}^2=\mu_{ki}\mu_{jk}, \ 1 \leq k,l \leq n, \label{Eq3}
\end{align}
for $1 \leq i <j\leq n$. 
Now, since each map scales the generators by scalar multiples, the corresponding automorphisms $\nu_{x_i}$ and $\nu_{x_j}$ commute pairwise for all $1\leq i<j\leq n$. 

Let $\Omega^{1}(C)$ be the free right $C$-module of rank $n$ generated by the symbols $dx_i$, $1 \leq i \leq n$. We define a left $C$-module structure on $\Omega^{1}(C)$ by specifying, for all $p \in C$,
\begin{align}
    pdx_i = (dx_i)\nu_{x_i}(p), \ 1 \leq i \leq n \label{relrightmod}.
\end{align}

The relations in $\Omega^{1}C$ are given by 
\begin{align}
x_idx_i = &\ -(dx_i) x_i, \ 1 \leq i \leq n, \notag \\
x_idx_j = &\ -(dx_j)\mu_{ij}x_i, \ 1 \leq i<j \leq n,  \label{rel1} \\
x_jdx_i = &\ -(dx_i)\mu_{ji}x_j, \ 1 \leq i<j \leq n.  \notag
\end{align}
    
We want to extend the correspondences 
\begin{equation*}
x_i \mapsto d x_i, \ 1\leq i \leq n
\end{equation*} 

to a map $d: C \to \Omega^{1} (C)$ satisfying the Leibniz's rule. This is possible if it is compatible with the nontrivial relations of Definition \ref{D1114}, i.e. if the equalities
\begin{align*}
        dx_ix_j+x_idx_j+\mu_{ij}dx_jx_i+\mu_{ij}x_jdx_i-\sum_{k=1}^n(M_k)_{ij}\left(dx_jx_j+x_jdx_j\right)  &\ = 0
\end{align*}
hold.

Define $\Bbbk$-linear maps 
\begin{equation*}
\partial_{x_i}: C \rightarrow C, \ 1\leq i \leq n
\end{equation*}

such that
\begin{align*}
    d(a)=\sum_{i=1}^n(dx_i)\partial_{x_i}(a), \ {\rm for\ all} \ a \in C.
\end{align*}

Since $dx_i$, $1\leq i \leq n$ form a free generating set of the right $C$-module $\Omega^1(C)$, the above definitions yield well-defined left $C$-module actions. It is worth noting that $d(a) = 0$ holds if and only if $\partial_{x_i}(a) = 0$ for all $1\leq i \leq n$. Employing the relations in {\rm (}\ref{relrightmod}{\rm )} along with the definitions of the automorphisms $\nu_{x_i}$, $1\leq i \leq n$, one obtains the following expressions:
\begin{align*}
\partial_{x_i}(x_1^{l_1}\cdots x_n^{l_n}) = (-1)^{\circ}\prod_{k=1}^{i-1}\mu_{ki}^{l_k}x_1^{l_1}\cdots x_{i-1}^{l_{i-1}}x_i^{l_i-1}x_{i+1}^{l_{i+1}}\cdots x_n^{l_n},
\end{align*}
\noindent where $\circ = \sum_{j=1}^{i}l_j$, for $1\leq i \leq n$.

Consequently, $d(a) = 0$ if and only if $a$ is a scalar multiple of the identity element. This establishes that the differential graded algebra $(\Omega (C), d)$ is connected, where $\Omega (C)$ decomposes as 
\[
\Omega (C) =\bigoplus_{i=0}^{n}\Omega^i (C)
\]
Extending the differential $d$ to higher-degree forms in a manner consistent with relations {\rm (}\ref{rel1}{\rm )}, we derive the following rules for $\Omega^l(C)$  $(2\leq l\leq n-1)$:
\begin{align}
\bigwedge_{k=1}^{l}dx_{q(k)} = (-1)^{\sharp}\prod_{r,s\in P}\mu_{rs}^{-1}\bigwedge_{k=1}^ldx_{p(k)}
\end{align}

\noindent where 
$$
q:\{1,\ldots,l\}\rightarrow \{1,\ldots,n\}
$$ 

\noindent is an injective map and 
$$
p:\{1,\ldots,l\}\rightarrow \text{Im}(q)$$

\noindent is an increasing injective map and $\sharp$ is the number of $2$-permutations needed to transform $q$ into $p$, and $P := \{(s, t) \in \{1, \ldots, l\} \times \{1, \ldots, l\} \mid q(s) >  q(t)\}$.

Since the automorphisms $\nu_{x_i}$, $1\leq i \leq n$ commute with each other, there are no additional relations to the previous ones, so we get that
\begin{align*}
\Omega^{n-1} (C) = \left[\bigoplus_{r=1}^{n} \bigwedge_{\substack{j=1,\\ j \neq r}}^{n}dx_j\right]C.
\end{align*}

Now, we have $\Omega^n(C) = \omega C\cong C$ as a right and left $C$-module, with $\omega=dx_1\wedge \cdots \wedge dx_n$, where $\nu_{\omega}=\nu_{x_1}\circ\cdots\circ\nu_{x_n}$, we have that $\omega$ is a volume form of $C$. From Proposition \ref{BrzezinskiSitarz2017Lemmas2.6and2.7} (2) we get that $\omega$ is an integral form by setting
\begin{align*}
    \omega_i^j = &\ \bigwedge_{k=1}^{j}dx_{p_{i,j}(k)}, \text{ for } 1\leq i \leq \binom{n}{j-1}, \\
    \bar{\omega}_i^{n-j} = &\ (-1)^{\sharp_{i,j}}\prod_{r,s\in P_{i,j}}\mu_{r,s}^{-1}\bigwedge_{k=j+1}^{n}dx_{\bar{p}_{i,j}(k)}, \text{ for } 1\leq i \leq \binom{n}{j-1},
\end{align*}
for $1\leq j \leq n$ and where 
\begin{align*}
    p_{i,j}:\{1,\ldots,j\}\rightarrow &\ \{1,\ldots,n\}, \quad {\rm and} \\
\bar{p}_{i,j}:\{j+1,\ldots,n\}\rightarrow &\ (\text{Im}(p_{i,j}))^c
\end{align*}

(the symbol $\square^c$ denotes the complement of the set $\square$), are increasing injective maps, and $\sharp_{i,j}$ is the number of $2$-permutation needed to transform 
\[
\left\{\bar{p}_{i,j}(j+1),\ldots, \bar{p}_{i,j}(n), p_{i,j}(1), \ldots, p_{i,j}(j)\right\} \quad {\rm into\ the\ set} \quad \{1, \ldots, n\},
\]

and 
\[
P_{i,j} :=\{(s, t) \in \{1, \ldots, j\}\times\{j+1, \ldots, n\} \mid p_{i,j}(s)< \bar{p}_{i,j}(t)\}.
\]

Consider $\omega' \in \Omega^j(C)$, that is,  
\begin{align*}
\omega' =\sum_{i=1}^{\binom{n}{j-1}}\bigwedge_{k=1}^{j}dx_{p_{i,j}(k)}b_i, \quad {\rm with} \  b_i \in \Bbbk.
\end{align*}

Then
\begin{align*}
 \sum_{i=1}^{\binom{n}{j-1}}\omega_{i}^{j}\pi_{\omega}(\bar{\omega}_i^{n-j}\wedge \omega') = &\ \sum_{i=1}^{\binom{n}{j-1}}\left[\bigwedge_{k=1}^{j}dx_{p_i(k)}\right] \cdot  \pi_{\omega} \left[(-1)^{\sharp_{i,j}} \square^{*} \wedge \omega'\right] \\
 = &\ \displaystyle \sum_{i=1}^{\binom{n}{j-1}}\bigwedge_{k=1}^{j}dx_{p_{i,j}(k)}b_i =  \omega',
\end{align*}

where 
\begin{align*}
    \square^{*} := &\ \prod_{(r,s)\in P_{i,j}}\mu_{rs}^{-1} \bigwedge_{k=j+1}^{n}dx_{\bar{p}_{i,j}(k)}.
\end{align*}

By Proposition \ref{BrzezinskiSitarz2017Lemmas2.6and2.7} (2), it follows that $C$ is differentially smooth.
\end{proof}

\begin{corollary} Let $C$ a graded Clifford algebra. If $C$ satisfies conditions in Lemma \ref{lemmaindepent}, then $C$ is differentially smooth.
\end{corollary}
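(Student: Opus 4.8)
The plan is to deduce this statement directly from Theorem~\ref{Theo1}, by observing that every graded Clifford algebra is a graded skew Clifford algebra for the trivial choice of parameter matrix. Concretely, comparing Definition~\ref{D1110} with Definition~\ref{D1114}, a graded Clifford algebra $C$ associated to symmetric matrices $M_1,\ldots,M_n$ is precisely the graded skew Clifford algebra $C(\mu,M_1,\ldots,M_n)$ in which $\mu_{ij}=1$ for all $i,j$: the relations $x_ix_j+x_jx_i=\sum_k(M_k)_{ij}y_k$ coincide with relations (i) of Definition~\ref{D1114} for $\mu\equiv 1$, every symmetric matrix is $\mu$-symmetric in the sense of Definition~\ref{D1112} when $\mu\equiv 1$, and the centrality of the $y_k$ guarantees that $\{y_1,\ldots,y_n\}$ is itself a normalizing sequence spanning $\Bbbk y_1+\cdots+\Bbbk y_n$, so condition (ii) of Definition~\ref{D1114} holds.

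With this identification in hand, I would check that the numerical hypothesis of Theorem~\ref{Theo1} is automatically satisfied. That hypothesis asks, for all $1\le k,l\le n$ and $1\le i<j\le n$, that $(M_l)_{ij}=0$ or $\mu_{lk}^2=\mu_{ki}\mu_{jk}$. Since $\mu_{ab}=1$ for every pair $(a,b)$, we have $\mu_{lk}^2=1=\mu_{ki}\mu_{jk}$, so the second alternative always holds and the condition is vacuous for graded Clifford algebras. Consequently, if $C$ satisfies the equivalent conditions of Lemma~\ref{lemmaindepent}—in particular that $M_1,\ldots,M_n$ are linearly independent, equivalently $y_i\in(C_1)^2$ for all $i$—then all the hypotheses of Theorem~\ref{Theo1} are met, and the conclusion that $C$ is differentially smooth follows immediately.

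I do not expect any genuine obstacle here: the argument is essentially a specialization of the proof of Theorem~\ref{Theo1}. The only point requiring a moment's care is the verification that a graded Clifford algebra really does fit Definition~\ref{D1114}, namely the normalizing-sequence clause, but this is immediate from centrality of the $y_k$. One may also record, for concreteness, that the automorphisms of \eqref{Auto1}--\eqref{Auto2} specialize to $\nu_{x_i}(x_i)=-x_i$ and $\nu_{x_i}(x_j)=-x_j$ for $j\neq i$, that the first-order calculus $\Omega^1(C)$ and the commuting relations on higher forms are inherited verbatim from the proof of Theorem~\ref{Theo1}, and that the integrating volume form is $\omega=dx_1\wedge\cdots\wedge dx_n$ with $\nu_\omega=\nu_{x_1}\circ\cdots\circ\nu_{x_n}$.
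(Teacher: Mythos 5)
Your proposal is correct and follows exactly the paper's own argument: a graded Clifford algebra is the graded skew Clifford algebra with $\mu_{ij}=1$ for all $i,j$, so the condition $\mu_{lk}^2=\mu_{ki}\mu_{jk}$ of Theorem~\ref{Theo1} holds trivially and the conclusion follows. The extra details you record (the normalizing-sequence clause via centrality of the $y_k$, the specialization of the automorphisms) are sound but not needed beyond what the paper states.
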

\begin{proof}
In this case, we have $\mu_{ij}=1$ for all $1\leq i,j \leq n$. Thus the condition $\mu_{lk}^2=\mu_{ki}\mu_{jk}, \ 1 \leq k,l \leq n$ of Theorem \ref{Theo1} is satisfied.
\end{proof}

\section{Examples}\label{examples}

In this section we present several examples of skew Clifford algebras that satisfy the hypotheses of Theorem~\ref{Theo1}, together with one example that does not, in order to illustrate both the scope and the limitations of our result.

\begin{example}
We consider Example~1 in \cite{Cassidy2010}. In this case the algebra is generated by
$x_1,x_2,x_3,x_4$ subject to the relations
\begin{align*}
     x_1x_4 + i x_4x_1 &= 0, 
     & x_3^2 &= x_1^2, 
     & x_1x_3 - x_3x_1 &= x_2^2, \\
     x_2x_3 - j^{-1}x_3x_2 &= 0, 
     & x_4^2 &= x_2^2, 
     & x_2x_4 - x_4x_2 &= \gamma x_1^2,
\end{align*}
where $i^2 = -1$, $j = \pm i$ and $\gamma \in \Bbbk^{\ast}$. The corresponding matrices
$M_1,\dots,M_4$ and $\mu$ are
\begin{align*}
   M_{1} &=
   \begin{bmatrix}
    0 & 0 & 0 & 0 \\
    0 & 0 & 0 & \gamma \\
    0 & 0 & 2 & 0 \\
    0 & -\gamma & 0 & 0 
   \end{bmatrix},
   &
   M_{2} &=
   \begin{bmatrix}
    0 & 0 & 1 & 0 \\
    0 & 0 & 0 & 0 \\
    -1 & 0 & 0 & 0 \\
    0 & 0 & 0 & 2 
   \end{bmatrix}, \\
   M_{3} &=
   \begin{bmatrix}
    2 & 0 & 0 & 0 \\
    0 & 0 & 0 & \gamma \\
    0 & 0 & 0 & 0 \\
    0 & -\gamma & 0 & 0 
   \end{bmatrix},
   &
   M_{4} &=
   \begin{bmatrix}
    0 & 0 & 1 & 0 \\
    0 & 2 & 0 & 0 \\
    -1 & 0 & 0 & 0 \\
    0 & 0 & 0 & 0 
   \end{bmatrix}, \\
   \mu &=
   \begin{bmatrix}
    1 & i & -1 & i \\
    -i & 1 & -j^{-1} & -1 \\
    -1 & -j & 1 & i \\
    -i & -1 & -i & 1 
   \end{bmatrix}.
\end{align*}
Therefore this graded skew Clifford algebra satisfies the hypotheses of
Theorem~\ref{Theo1} and is, in particular, differentially smooth.
\end{example}

\begin{example}
We now turn to Example~2 in \cite{Cassidy2010}. Here the graded skew Clifford algebra
is generated by $x_1,x_2,x_3,x_4$ with defining relations
\begin{align*}
     x_1x_3 + x_3x_1 &= \beta_2 x_2^2, 
     & x_2x_3 - x_3x_2 &= 0, 
     & x_1x_4 + x_4x_1 &= \alpha_2 x_3^2, \\
     \alpha_1 x_3^2 + \beta_1 x_2^2 &= x_1^2, 
     & x_2x_4 + x_4x_2 &= x_3^2, 
     & x_2^2 &= x_4^2,
\end{align*}
with parameters subject to
\[
\alpha_2(\alpha_2 - 1) = 0
\quad\text{and}\quad
(\alpha_1^2 + \alpha_2^2 \beta_1)(\beta_1^2 + \beta_2^2 \alpha_1) \neq 0.
\]
The corresponding matrices $M_1,\dots,M_4$ and $\mu$ are given by
\begin{align*}
   M_{1} &=
   \begin{bmatrix}
    0 & 0 & 0 & 0 \\
    0 & 0 & 0 & 0 \\
    0 & 0 & 2\alpha_1^{-1} & 0 \\
    0 & 0 & 0 & 2\beta_1^{-1}
   \end{bmatrix},
   &
   M_{2} &=
   \begin{bmatrix}
    2\beta_1 & 0 & \beta_2 & 0 \\
    0 & 0 & 0 & 0 \\
    \beta_2 & 0 & -2\alpha_1^{-1}\beta_1 & 0 \\
    0 & 0 & 0 & 0
   \end{bmatrix}, \\
   M_{3} &=
   \begin{bmatrix}
    2\alpha_1 & \pm \alpha_2 & 0 & \alpha_2 \\
    \pm \alpha_2 & 0 & 0 & 1 \\
    0 & 0 & 0 & 0 \\
    \alpha_2 & 1 & 0 & -2\alpha_1\beta_1^{-1}
   \end{bmatrix},
   &
   M_{4} &=
   \begin{bmatrix}
    0 & 0 & 0 & 0 \\
    0 & 2 & 0 & 0 \\
    0 & 0 & 0 & 0 \\
    0 & 0 & 0 & 0
   \end{bmatrix}, \\
   \mu &=
   \begin{bmatrix}
    1 & 1 & 1 & 1 \\
    1 & 1 & -1 & 1 \\
    1 & -1 & 1 & -1 \\
    1 & 1 & -1 & 1
   \end{bmatrix}.
\end{align*}
Under these hypotheses the algebra fits into the framework of
Theorem~\ref{Theo1}, and hence this graded skew Clifford algebra is
differentially smooth.
\end{example}

\begin{example}
We now examine Example~3 from \cite{Cassidy2010}. In this situation the graded skew
Clifford algebra is generated by $x_1,x_2,x_3,x_4$ with defining relations
\begin{align*}
     x_1x_3+\mu_{13}x_3x_1 &= 0, 
     & x_3x_4+\mu_{34}x_4x_3 &= 0, 
     & x_2x_3+x_3x_2 &= -x_4^2, \\
     x_1x_4+\mu_{14}x_4x_1 &= 0, 
     & x_2x_4+\mu_{24}x_4x_2 &= -x_1^2, 
     & x_2^2 &= x_4^2,
\end{align*}
where the parameters satisfy
\[
(\mu_{34})^2 = \mu_{23} = 1
\quad\text{and}\quad
\mu_{34}=\mu_{24}=(\mu_{14})^2=(\mu_{13})^2.
\]
The corresponding matrices $M_1,\dots,M_4$ and $\mu$ are
\begin{align*}
   M_{1} &=
   \begin{bmatrix}
    0 & 0 & 0 & 0 \\
    0 & 0 & 0 & -1 \\
    0 & 0 & 0 & 0 \\
    0 & -(\mu_{14})^{-2} & 0 & 0 
   \end{bmatrix},
   &
   M_{2} &=
   \begin{bmatrix}
    0 & 0 & 0 & 0 \\
    0 & 0 & 0 & 0 \\
    0 & 0 & 0 & 0 \\
    0 & 0 & 0 & 2 
   \end{bmatrix}, \\
   M_{3} &=
   \begin{bmatrix}
    0 & 0 & 0 & 0 \\
    0 & 0 & 0 & 0 \\
    0 & 0 & 0 & 0 \\
    0 & 0 & 0 & 0 
   \end{bmatrix},
   &
   M_{4} &=
   \begin{bmatrix}
    0 & 0 & 0 & 0 \\
    0 & 2 & -1 & 0 \\
    0 & -1 & 0 & 0 \\
    0 & 0 & 0 & 0 
   \end{bmatrix}, \\
   \mu &=
   \begin{bmatrix}
    1 & \mu_{14} & \pm \mu_{14} & \mu_{14} \\
    \mu_{14}^{-1} & 1 & 1 & (\mu_{14})^2 \\
    \pm (\mu_{14})^{-1} & 1 & 1 & (\mu_{14})^2 \\
    \mu_{14} & (\mu_{14})^{-2} & (\mu_{14})^{-2} & 1
   \end{bmatrix}.
\end{align*}
When $(\mu_{14})^2 = 1$ (and hence $\mu_{34}=\mu_{24}=(\mu_{13})^2=1$), the hypotheses of
Theorem~\ref{Theo1} are satisfied, and we conclude that this graded skew Clifford
algebra is differentially smooth.
\end{example}

\begin{example}[{\cite[Example 2.1]{Nafari2015}}]\label{ExampleNoDS}
Consider the graded Clifford algebra generated by $x_1,x_2,x_3$ with defining relations
\[
x_1x_2+2x_2x_1 = x_3^2,\qquad
x_1x_3+x_3x_1 = 0,\qquad
x_2x_3+x_3x_2 = 0.
\]
The associated matrices $M_1,M_2,M_3$ and $\mu$ are given by
\begin{align*}
   M_{1} &=
   \begin{bmatrix}
    2 & 0 & 0 \\
    0 & 0 & 0 \\
    0 & 0 & 0 
   \end{bmatrix},
   &
   M_{2} &=
   \begin{bmatrix}
    0 & 0 & 0 \\
    0 & 2 & 0 \\
    0 & 0 & 0
   \end{bmatrix}, \\[0.5em]
   M_{3} &=
   \begin{bmatrix}
    0 & 1 & 0 \\
    \tfrac{1}{2} & 0 & 0 \\
    0 & 0 & 2
   \end{bmatrix},
   &
   \mu &=
   \begin{bmatrix}
    1 & 2 & 1 \\
    \tfrac{1}{2} & 1 & 1 \\
    1 & 1 & 1
   \end{bmatrix}.
\end{align*}
In this case we have $(M_3)_{21}\neq 0$ and $(\mu_{31})^2 \neq \mu_{12}\mu_{11}$, so the hypotheses of Theorem~\ref{Theo1} are not satisfied. Hence this graded skew Clifford algebra lies outside the scope of our differential smoothness criterion.
\end{example}

\begin{remark}
In \cite{Nafari2011}, Nafari, Vancliff and Zhang show that certain algebras of type $A$ whose point scheme is an elliptic curve can be realized as $\Bbbk$-algebras generated by $x,y,z$ with defining relations
\[
axy+byx+cz^2= 0,\qquad ayz+bzy+cx^2= 0,\qquad azx+bxz+cy^2 = 0,
\]
where $a,b,c \in \Bbbk$, and that, under suitable conditions, these algebras are regular graded skew Clifford algebras. These algebras coincide with the $3$-dimensional Sklyanin algebras \cite{IyuduShkarin2017}. Recently, the author, Herrera and Higuera proved that all non-degenerate $3$-dimensional Sklyanin algebras are differentially smooth \cite{Herrera2025}.

\end{remark}

\section{Future work}\label{FutureworkDifferentialsmoothness}

As expected, a natural next step is to understand how far the differential smoothness criterion in Theorem~\ref{Theo1} extends within the class of graded skew Clifford algebras. In particular, one would like to determine whether the conditions on the matrices $M_\ell$ and the parameter matrix $\mu$ are not only sufficient but also close to being necessary in relevant families. This includes studying in detail those graded skew Clifford algebras that do not satisfy the hypotheses of Lemma~\ref{lemmaindepent} or Theorem~\ref{Theo1}, such as the algebra in Example~\ref{ExampleNoDS}, in order to decide whether they fail to be differentially smooth or whether a different choice of first-order calculus might still yield an integrable structure.

A second natural direction is to combine the techniques developed here with the rich classification theory of Artin-Schelter regular algebras arising from graded (skew) Clifford constructions. Many graded skew Clifford algebras appear as homogeneous coordinate rings of noncommutative quadrics or as building blocks in higher-dimensional regular algebras. It would be interesting to systematically compare differential smoothness with other notions of regularity (homological smoothness, twisted Calabi-Yau property, or the existence of balanced dualizing complexes) in these settings, and to identify geometric invariants (for instance, the point scheme or the behaviour of normalizing sequences) that control when a given Clifford-type algebra admits an integrable calculus.

From a more geometric viewpoint, one may also investigate additional structures compatible with the calculi constructed in this paper. Possible directions include the existence of hom-connections and connections with prescribed curvature, Hodge-type decompositions, or Laplace operators on graded skew Clifford algebras, in the spirit of noncommutative Riemannian geometry. Understanding when such extra structures can be defined, and how they behave under deformations of the matrices $(M_\ell)$ and the parameters $\mu_{ij}$, could shed further light on the interplay between the algebraic data and the resulting noncommutative geometry.


\begin{thebibliography}{60}


\bibitem{Apel1988} J. Apel, Gr\"obnerbasen in nichtkommutativen Algebren und ihre Anwendung, Doctoral Thesis, University of Leipzig, Leipzig, Germany (1988).





\bibitem{Bavula2020} V. V. Bavula, Generalized Weyl algebras and diskew polynomial rings, {\em J. Algebra Appl.} {\bf 19}(10) (2020) 2050194.

\bibitem{Bavula2023} Bavula, V. V. (2023). Description of bi-quadratic algebras on 3 generators with PBW basis. {\em J. Algebra} 631:695--730.

\bibitem{BavulaAlKhabyah2023} V. V. Bavula and A. Al Khabyah, Bi-quadratic algebras on 3 generators with PBW: class II.1 (2023) \url{https://arxiv.org/abs/2312.17182} 

\bibitem{BeggsBrzezinski2005} E. J. Beggs and T. Brzezi{\'n}ski, The Serre spectral sequence of a noncommutative fibration for de Rham cohomology, {\em Acta Math.} {\bf 195}(2) (2005) 155--196.



\bibitem{BellGoodearl1988} A. Bell and K. Goodearl, Uniform Rank Over Differential Operator Rings and Poincar{\'e}-Birkhoff-Witt Extensions, {\em Pacific J. Math.} {\bf 131}(1) (1988) 13--37.

\bibitem{BellSmith1990} A. D. Bell and S. P. Smith, Some 3-dimensional skew polynomial ring, University of Wisconsin, Milwaukee (1990).


\bibitem{Brzezinski2008} T. Brzezi{\'n}ski, Noncommutative Connections of The Second Kind, {\em J. Algebra Appl.} {\bf 7}(5) (2008) 557--573.

\bibitem{Brzezinski2011} T. Brzezi{\'n}ski, Divergences on Projective Modules and Noncommutative Integrals, {\em Int. J. Geom. Methods Mod. Phys.} {\bf 8}(4) (2011) 885--896.

\bibitem{Brzezinski2014} T. Brzezi{\'n}ski, On the Smoothness of the Noncommutative Pillow and Quantum Teardrops, {\em SIGMA Symmetry Integrability Geom. Methods Appl.} {\bf 10}(015) (2014) 1--8.

\bibitem{Brzezinski2015} T. Brzezi{\'n}ski, Differential smoothness of affine Hopf algebras of Gelfand-Kirillov of dimension two, {\em Colloq. Math.} {\bf 139}(1) (2015) 111--119.

\bibitem{Brzezinski2016}T. Brzezi\'nski, Noncommutative Differential Geometry of Generalized Weyl Algebras, {\em SIGMA Symmetry Integrability Geom. Methods Appl.} {\bf 12}(059) (2016) 1--18.

\bibitem{BrzezinskiElKaoutitLomp2010} T. Brzezi{\'n}ski, L. El. Kaoutit and C. Lomp, Noncommutative integral forms and twisted multi-derivations, {\em J. Noncommut. Geom.} {\bf 4}(2) (2010) 281--312.

\bibitem{BrzezinskiLomp2018} T. Brzezi{\'n}ski and C. Lomp, Differential smoothness of skew polynomial rings, {\em J. Pure Appl. Algebra} {\bf 222}(9) (2018) 2413--2426.

\bibitem{BrzezinskiSitarz2017}T. Brzezi{\'n}ski and A. Sitarz, Smooth geometry of the noncommutative pillow, cones and lens spaces, {\em J. Noncommut. Geom.} {\bf 11}(2) (2017) 413--449.

\bibitem{BuesoTorrecillasVerschoren2003}J. Bueso, J. G\'omez-Torrecillas and A. Verschoren, {\em Algorithmic Methods in noncommutative Algebra. Applications to Quantum Groups}, Mathematical Modelling: Theory and Applications (Springer Dordrecht, 2003).

\bibitem{Cassidy2010} T. Cassidy and M. Vancliff, Generalizations of Graded Clifford Algebras and of Complete Intersections, {\em J. Lond. Math. Soc.} {\bf 81}(1) (2010) 91--112.

\bibitem{Cassidy2014} T. Cassidy and M. Vancliff, Corrigendum: Generalizations of Graded Clifford Algebras and of Complete Intersections, {\em J. Lond. Math. Soc.} {\bf 90}(2) (2014) 631--636.



\bibitem{Connes1994} A. Connes, (1994). Noncommutative Geometry. New York: Academic Press.

\bibitem{CuntzQuillen1995} J. Cuntz and D. Quillen, (1995). Algebra Extensions and Nonsingularity. {\em J. Amer. Math. Soc.} 8(2):251--289.

\bibitem{DuboisViolette1988} M. Dubois-Violette, D\'erivations et calcul diff\'erentiel non commutatif, {\em C. R. Acad. Sci. Paris, Ser. I} {\bf 307} (1988) 403--408.

\bibitem{DuboisVioletteKernerMadore1990} M. Dubois-Violette, R. Kerner and J. Madore, Noncommutative differential geometry of matrix algebras, {\em J. Math. Phys.} {\bf 31}(2) (1990) 316--322.

\bibitem{Fajardoetal2020} W. Fajardo, C. Gallego, O.  Lezama, A. Reyes, H. Su\'arez, and H. Venegas, {\em Skew PBW Extensions: Ring and Module-theoretic properties, Matrix and Gr\"obner Methods, and Applications}, Algebra and Applications, Vol. 28 (Springer Cham, Switzerland, 2020).





\bibitem{GallegoLezama2010} C. Gallego and O. Lezama, Gr{\"o}bner {B}ases for {I}deals of $\sigma$-{P}{B}{W} {E}xtensions, {\em Comm. Algebra} {\bf 39}(1) (2010) 50--75.

\bibitem{GelfandKirillov1966} I. M. Gelfand and A. A. Kirillov, On fields connected with the enveloping algebras of {L}ie algebras, {\em Dokl. Akad. Nauk} {\bf 167}(3) (1966) 503--505.

\bibitem{GelfandKirillov1966b} I. M. Gelfand and A. A. Kirillov, Sur les corps li\'es aux algèbres enveloppantes des algèbres de Lie, {\em Publ. Math. IHES} {\bf 31} (1966) 5--19.

\bibitem{Giachettaetal2005} Giachetta, G., Mangiarotti, L., Sardanashvily, G. (2005). Geometric and {A}lgebraic {T}opological {M}ethods in {Q}uantum {M}echanics. Singapore: World Scientific Publishing.



\bibitem{Grothendieck1964} Grothendieck, A. (1964). \'El\'ements de g\'eom\'etrie alg\'ebrique. {\em Publ. Math. Inst. Hautes Etudes Sci.} 20:5--251.

\bibitem{Herrera2025} K. Herrera, S. Higuera, and A. Rubiano, A view toward the smooth geometry of Sklyanin algebras, (2025), \url{https://arxiv.org/pdf/2507.19719}

\bibitem{Hinchcliffe2005} O. Hinchcliffe, Diffusion algebras, PhD Thesis, University of Sheffield, Sheffield, England (2005).

\bibitem{IsaevPyatovRittenberg2001} A. P. Isaev, P. N. Pyatov and V. Rittenberg, Diffusion algebras, {\em J. Phys. A} {\bf 34}(29) (2001) 5815--5834.

\bibitem{IyuduShkarin2017} N. Iyudu and S. Shkarin, Three dimensional \uppercase{S}klyanin algebras and \uppercase{G}r{\"o}bner bases, {\em J. Algebra} {\bf 470} (2017) 379--419.

\bibitem{Karacuha2015} S. Kara\c cuha, Aspects of Noncommutative Differential Geometry, PhD Thesis, Universidade do Porto, Porto, Portugal (2015).

\bibitem{KaracuhaLomp2014}S. Kara\c cuha and C. Lomp, Integral calculus on quantum exterior algebras, {\em Int. J. Geom. Methods Mod. Phys.} {\bf 11}(04) (2014) 1450026.

\bibitem{Krahmer2012} Kr\"ahmer, U. (2012). On the Hochschild (co)homology of quantum homogeneous spaces. {\em Israel J. Math.} 189(1):237--266.

\bibitem{KrauseLenagan2000} G. R. Krause and T. H. Lenagan, {\em Growth of Algebras and Gelfand–Kirillov Dimension. Revised Edition}, Graduate Studies in Mathematics 22 (American Mathematical Society, 2000).



\bibitem{Lezama2014b} O. Lezama and A. Reyes, Some Homological Properties of Skew PBW Extensions, {\em Comm. Algebra} {\bf 42}(3) (2014) 1200--1230.







\bibitem{Levandovskyy2005} V. Levandovskyy, Non-commutative Computer Algebra for polynomial algebras: Gr\"obner bases, applications and implementation, Doctoral Thesis, Universit\"at Kaiserslautern, Kaiserslautern, Germany (2005).

\bibitem{Li2002} H. Li, {\em Noncommutative Gr\"obner Bases and Filtered-Graded Transfer}, Lecture Notes in Math. 1795 (Springer Berlin, Heidelberg, 2002).

\bibitem{Manin1997} Y. Manin, {\em Gauge Field Theory and Complex Geometry. Second Edition}, Grundlehren der mathematischen Wissenschaften, Vol. 289 (Springer Berlin, Heidelberg, 1997).

\bibitem{McConnell2001} J. McConnell, J. Robson and L. Small, {\em Noncommutative Noetherian Rings}, American Mathematical Society, Vol. 30, 2001.

\bibitem{Nafari2015} M. Nafari and M. Vancliff, Graded Skew Clifford Algebras that are twists of Graded Clifford Algebras, {\em Comm. Algebra} {\bf 43}(2) (2015) 719--725.

\bibitem{Nafari2011} M. Nafari, M. Vancliff and J. Zhang, Classifying quadratic quantum $\mathbb{P}^2$S by using graded skew Clifford algebras, {\em J. Algebra} {\bf 346}(1) (2011) 152--164.


\bibitem{PyatovTwarock2002} P. N. Pyatov and R. Twarock, Construction of diffusion algebras, {\em J. Math. Phys.} {\bf 43}(6) (2002) 3268--3279.

\bibitem{RedmanPhDThesis1996} I. T. Redman, The noncommutative algebraic geometry of some skew polynomial algebras, PhD Thesis, University of Wisconsin - Milwaukee, Wisconsin, United States of America (1996).

\bibitem{Redman1999} I. T. Redman, The homogenization of the three dimensional skew polynomial algebras of type I, {\em Comm. Algebra} {\bf 27}(11) (1999) 5587--5602.




\bibitem{ReyesSarmiento2022} A. Reyes and C. Sarmiento, On the differential smoothness of 3-dimensional skew polynomial algebras and diffusion algebras, {\em Internat. J. Algebra and Comput.} {\bf 32}(3) (2022) 529--559.


\bibitem{ReyesSuarez2017} A. Reyes and H. Su\'arez, $\sigma$-PBW Extensions of Skew Armendariz Rings, {\em Adv. Appl. Clifford Algebras} {\bf 27}(4) (2017) 3197--3224.

\bibitem{Rosenberg1995} A. L. Rosenberg, {\em Noncommutative Algebraic Geometry and Representations of Quantized Algebras}, Mathematics and Its Applications, Vol. 330 (Springer Dordrecht, 1995).








\bibitem{Schelter1986} Schelter, W. F. (1986). Smooth algebras. {\em J. Algebra} 103(2):677--685.

\bibitem{SeilerBook2010} W. M. Seiler, {\em Involution. {T}he {F}ormal {T}heory of {D}ifferential {E}quations and its {A}pplications in {C}omputer {A}lgebra}, Algorithms and Computation in Mathematics (AACIM), Vol. 24 (Springer Berlin, Heidelberg, 2010).






\bibitem{StaffordZhang1994} Stafford, J. T., Zhang, J. J. (1994). Homological Properties of (Graded) Noetherian PI Rings. {\em J. Algebra} 168(3):988--1026.






\bibitem{Vancliff2015a} M. Vancliff, The Interplay of Algebra and Geometry in the Setting of Regular Algebras, {\em Commut. Algebra and Noncommut. Algebra. Geom.} {\bf 6} (2015) 371--390.

\bibitem{Vancliff2015b} M. Vancliff, On the Notion of Complete Intersection Outside the Setting of Skew Polynomial Rings, {\em Comm. Algebra} {\bf 43}(2) (2015) 460--470.

\bibitem{VandenBergh1998}M. Van den Bergh, A relation between Hochschild homology and cohomology for Gorenstein rings. {\em Proc. Amer. Math. Soc.} 126(5):1345--1348. Erratum, Ibid. (1998). 130(9):2809--2810.

\bibitem{Veerapen2017} P. Veerapen, Graded Clifford Algebras and Graded Skew Clifford Algebras and Their Role in the Classification of Artin--Schelter Regular Algebras, {\em Adv. Appl. Clifford Algebr.} {\bf 27}(3) (2017) 2855--2871.



\bibitem{Woronowicz1987} S. L. Woronowicz, Twisted $SU(2)$ {G}roup. An {E}xample of a {N}oncommutative {D}ifferential {C}alculus, {\em Publ. Res. Inst. Math. Sci.} {\bf 23}(1) (1987) 117--181.


\end{thebibliography}
\end{document}